\documentclass[a4paper,12pt,twoside,leqno,final]{amsart}
\usepackage{amsmath}
\usepackage{amssymb}

\newtheorem{thm}{Theorem}[section]
\newtheorem{lem}[thm]{Lemma}

\theoremstyle{definition}

\newcommand{\C}{{\mathbb C}}
\newcommand{\D}{{\mathbb D}}

\newcommand{\T}{{\mathbb T}}
\newcommand{\Z}{{\mathbb Z}}
\newcommand{\N}{{\mathbb N}}

\newcommand{\La}{\Lambda}

\newcommand{\eps}{\varepsilon}

\newcommand{\f}{\frac}
\newcommand{\ov}{\overline}

\newcommand{\de}{\delta}
\newcommand{\la}{\lambda}
\newcommand{\ze}{\zeta}
\renewcommand{\th}{\theta}

\newcommand{\ph}{\varphi}

\numberwithin{equation}{section}

\title[Extreme points in quotients of Hardy spaces]
{Extreme points in quotients of Hardy spaces}
\author{Konstantin M. Dyakonov}
\address{Departament de Matem\`atiques i Inform\`atica, IMUB, BGSMath, Universitat de Barcelona, Gran Via 585, E-08007 Barcelona, Spain}
\address{ICREA, Pg. Llu\'is Companys 23, E-08010 Barcelona, Spain}
\email{konstantin.dyakonov@icrea.cat}
\keywords{Hardy space, quotient space, Toeplitz operator, inner function, outer function, extreme point}
\subjclass[2010]{30H05, 30H10, 30J05, 46A55, 47B35}
\thanks{Supported in part by grants PID2021-123405NB-I00 and PID2024-160033NB-I00 from El Ministerio de Ciencia, Innovaci\'on y Universidades (Spain).}

\begin{document}
\begin{abstract}
In the Hardy spaces $H^1$ and $H^\infty$, there are neat and well-known characterizations of the extreme points of the unit ball. We obtain counterparts of these classical theorems when $H^1$ (resp., $H^\infty$) gets replaced by the quotient space $H^1/E$ (resp., $H^\infty/E$), under certain assumptions on the subspace $E$. In the $H^1$ setting, we also treat the case where the underlying space is taken to be the kernel of a Toeplitz operator.
\end{abstract}

\maketitle

\section{Introduction and statement of results}

Let $\T$ stand for the circle $\{\ze\in\C:|\ze|=1\}$ and $m$ for its Haar measure. For $1\le p\le\infty$, the Lebesgue space $L^p=L^p(\T,m)$ (of complex-valued functions) is defined in the usual manner and equipped with the natural norm $\|\cdot\|_p$. With a function $f\in L^1$ we associate the sequence of its {\it Fourier coefficients} 
$$\widehat f(k):=\int_\T\ov\ze^kf(\ze)\,dm(\ze),\qquad k\in\Z,$$
and the set 
$$\text{\rm spec}\,f:=\{k\in\Z:\,\widehat f(k)\ne0\},$$
known as the {\it spectrum} of $f$. The {\it Hardy space} $H^p$ is then defined, for $p$ as above, by
$$H^p:=\{f\in L^p:\,\text{\rm spec}\,f\subset\mathbb Z_+\},$$
where $\mathbb Z_+:=\{0,1,2,\dots\}$. Finally, we write $H^p_0:=\{f\in H^p:\,\widehat f(0)=0\}$. 

\par The Poisson integral (i.e., harmonic extension) of an $H^p$ function is holomorphic on the unit disk 
$$\D:=\{z\in\C:|z|<1\},$$
and this property actually characterizes the $H^p$ functions among those in $L^p$. With this in mind, we may (and often will) identify a given $H^p$ function with its Poisson extension into $\D$ and view it as a holomorphic function therein. See any of \cite{G, Hof, K} for the underlying theory and basic facts about Hardy spaces.

\par Now let $X$ be a Banach space and let
$$\text{\rm ball}\,(X):=\{x\in X:\,\|x\|\le1\}$$
be its closed unit ball. As usual, an element $x$ of $\text{\rm ball}\,(X)$ is said to be an {\it extreme point} of the ball if it is not expressible in the form $x=\f12(u+v)$ with two distinct points $u,v\in\text{\rm ball}\,(X)$. Equivalently, $x$ is an extreme point of $\text{\rm ball}\,(X)$ if and only if the only vector $y\in X$ satisfying $\|x+y\|\le1$ and $\|x-y\|\le1$ is $y=0$. 

\par A remarkable theorem of de Leeuw and Rudin (see \cite{dLR} or \cite[Chapter 9]{Hof}) states that a unit-norm function $f\in H^1$ is an extreme point of $\text{\rm ball}\,(H^1)$ if and only if $f$ is {\it outer} (i.e., $f$ has no zeros in $\D$ and the harmonic function $\log|f(z)|$ is the Poisson integral of its boundary values). Another classical result asserts that a unit-norm function $h\in H^\infty$ is an extreme point of $\text{\rm ball}\,(H^\infty)$ if and only if 
$$\int_\T\log(1-|h|)\,dm=-\infty$$
(or equivalently, $\log(1-|h|)\notin L^1$); see \cite[Section V]{dLR} or \cite[Chapter 9]{Hof}. Our purpose here is to adapt, or rather transfer, these characterizations---along with a more recent result from \cite{DPAMS} that generalizes the former theorem---to the appropriate quotient spaces. In doing so, we are influenced by an approach suggested by Koosis in \cite{KDuke} to describe the extreme points of $\text{\rm ball}\,(L^\infty/H^\infty)$; specifically, we refer to \cite[Theorem 4.1]{KDuke} and the proof that follows it. Meanwhile, we pause to recall a bit of terminology and introduce another piece of notation.

\par Given a Banach space $X$ and a closed subspace $Y\subset X$, the quotient space $X/Y$ consists of the cosets 
$$x+Y:=\{x+y:\,y\in Y\}$$
with $x\in X$, the norm being defined by 
$$\|x+Y\|_{X/Y}:=\inf\{\|x+y\|_X:\,y\in Y\}.$$
With each $x\in X$ we associate the (possibly empty) set
$$\mathcal M(x;X,Y):=\left\{\xi\in x+Y:\,\|\xi\|_X=\|x+Y\|_{X/Y}\right\},$$
whose elements are precisely the points of minimum norm in $x+Y$. It should be noted that $\mathcal M(x;X,Y)$ is a convex set. We also observe that $\xi\in\mathcal M(x;X,Y)$ if and only if $x-\xi$ is a best approximation for $x$ in $Y$.

\par Now, if 
$$\mathcal M(x;X,Y)\ne\emptyset\quad\text{\rm for every }x\in X$$
(or equivalently, if every $x\in X$ has a best approximation in $Y$), then $Y$ is said to be {\it proximinal} in $X$. It is a classical result that if $\dim Y<\infty$, then $Y$ is proximinal; see, e.g., \cite[p.\,132]{Deu}. Another useful fact, based on the Banach--Alaoglu theorem, is that if $X$ is a dual space and $Y$ is weak-star closed in $X$, then $Y$ is proximinal in $X$; see \cite[p.\,239]{Phe} for details. On the other hand, it is known that every nonreflexive Banach space has a nonproximinal closed subspace; cf. \cite[p.\,140]{Deu} or \cite[p.\,253]{Phe}.

\par Our first result characterizes the extreme points of $\text{\rm ball}\,(H^1/E)$ for certain subspaces $E$ of $H^1$. To describe the subspaces $E$ to be dealt with, we first introduce the notation $H^1_\Phi$ for the annihilator in $H^1$ of a given set $\Phi\subset C(\T)$. In other words, once such a set $\Phi$ is fixed, we write 
$$H^1_\Phi:=\left\{f\in H^1:\,\int_{\T}f\phi\,dm=0\,\,\text{\rm for all }\phi\in\Phi\right\}.$$
As special cases, we mention the subspaces 
$$H^1(\La):=\{f\in H^1:\,\text{\rm spec}\,f\subset\La\}$$
corresponding to subsets $\La$ of $\mathbb Z_+$. Now, the subspaces $E$ that appear in Theorem \ref{thm:hone} below are of the form 
$$\th H^1_\Phi:=\left\{\th f:\,f\in H^1_\Phi\right\},$$
where $\th$ is an {\it inner function} (i.e., $\th\in H^\infty$ and $|\th|=1$ almost everywhere on $\T$). Note that if $\Phi=\{0\}$, or if $\Phi$ is empty, then $H^1_\Phi$ is just $H^1$ and the above formula produces the shift-invariant subspaces $\th H^1$. 

\begin{thm}\label{thm:hone} Let $E=\th H^1_\Phi$, with $\th$ an inner function and $\Phi$ a subset of $C(\T)$. If $\th$ is constant, assume in addition that $\Phi$ contains the constant function $1$. Given a unit-norm coset $f+E$ in $H^1/E$, the following are equivalent: 
\par{\rm(i.1)} $f+E$ is an extreme point of $\text{\rm ball}\,(H^1/E)$.
\par{\rm(ii.1)} There is an outer function $F\in H^1$ such that 
$$\mathcal M\left(f;H^1,E\right)=\{F\}.$$
\end{thm}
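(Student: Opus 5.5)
The plan is to combine three ingredients: proximinality of $E$, a simple transfer principle between minimal elements and extreme cosets, and the de Leeuw--Rudin theorem.

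\emph{Step 1: $\mathcal M(f;H^1,E)\ne\emptyset$.} By the F.\ and M.\ Riesz theorem, $H^1$ is isometrically a dual space, namely the dual of $C(\T)$ modulo a suitable closed subspace of the disk algebra type. In this duality the pairing is $\int_\T g\phi\,dm$ with $\phi\in C(\T)$. Both the condition $g\in\th H^1$ (equivalently $\ov\th g\in H^1$) and the conditions $\int g\phi\,dm=0$ for $\phi\in\Phi$ are given by pairings against continuous functions. Hence $E=\th H^1_\Phi$ is weak-star closed, and it is therefore proximinal by the Banach--Alaoglu fact recalled above.

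\emph{Step 2: (ii.1)$\Rightarrow$(i.1).} Suppose $f+E=\frac12\big((u+E)+(v+E)\big)$ with both cosets in the unit ball. Pick $U\in\mathcal M(u;H^1,E)$ and $V\in\mathcal M(v;H^1,E)$ using Step 1, so $\|U\|_1,\|V\|_1\le1$. Then $\frac12(U+V)\in f+E$ has norm $\le1=\|f+E\|$. It therefore lies in $\mathcal M(f;H^1,E)=\{F\}$. Since $F$ is outer of unit norm, it is an extreme point of $\text{\rm ball}\,(H^1)$ by de Leeuw--Rudin. Hence $U=V=F$, and so $u+E=v+E$.

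\emph{Step 3: (i.1)$\Rightarrow$(ii.1), outerness.} Take any $F\in\mathcal M(f;H^1,E)$ and suppose $F$ is not outer. Write $F=Ih^2$ with $I$ a nonconstant inner function and $h$ outer, $\|h\|_2=1$. For $c\in\C$ put
$$y_c:=h^2\,(c+\ov c I^2)\in H^1 .$$
Then $F\pm ty_c=Ih^2\big(1\pm 2t\,\text{\rm Re}(\ov c I)\big)$ pointwise. For $0<t<1/(2|c|)$ this gives
$$\|F\pm ty_c\|_1=1\pm 2t\,\text{\rm Re}\Big(\ov c\int_\T|h|^2I\,dm\Big).$$
Choosing $c\ne0$ on the real line where this real part vanishes gives $\|F\pm ty_c\|_1=1$. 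The cosets $F\pm ty_c+E$ then lie in the ball and average to $f+E$, so extremality forces $y_c\in E$. The task is to show that this is impossible for a suitable choice. There is freedom in the construction: $I$ can be replaced by any nonconstant inner divisor of the inner part of $F$, and $h^2$ by $h^2$ times other outer factors absorbed into the splitting. One wants to show that not all of the resulting $y$'s can lie in $\th H^1_\Phi$. The assumption $1\in\Phi$ when $\th$ is constant (so that then $E\subset H^1_0$) is exactly what rules out degenerate cases such as $E=H^1$.

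\emph{Step 4: (i.1)$\Rightarrow$(ii.1), uniqueness.} By Hahn--Banach there is $\phi\in L^\infty$ annihilating $E$ with $\|\phi\|_\infty=1$ and $\int F\phi\,dm=1$ for every $F\in\mathcal M(f;H^1,E)$. Consequently $\phi=\ov F/|F|$ a.e.\ for each such $F$, so all minimal elements share the same argument. If $F_1\ne F_2$ are both minimal, then $F_1\ov F_2\ge0$ and $F_1-F_2\in E$. Both are outer by Step 3, so $F_1/F_2$ is a positive ratio of outer $H^1$ functions. One then aims to manufacture, from this pair together with the inner factor $\th$ dividing $F_1-F_2$, a perturbation $y\notin E$ with $\|F\pm y\|_1\le1$, in the same spirit as Step 3.

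\emph{Main obstacle.} Steps 3 and 4 both require producing a norm-neutral perturbation that is genuinely outside $E$, rather than merely a nonzero one. This is where the specific structure $\th H^1_\Phi$ and the extra hypothesis on $\Phi$ must be used, following Koosis's argument for $L^\infty/H^\infty$. I expect this to be the hardest part, and I would attack it first via the annihilator $\phi$ from Step 4.
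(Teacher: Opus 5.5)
Your Step 2 is correct; it is the paper's Lemma 2.2 argued directly. The perturbation in Step 3 is also exactly the paper's. With $|c|=1$ and $u:=\ov c I$ one has $y_c=F(\ov u+u)=2F\,\text{\rm Re}\,u$, which is twice the paper's $g=f_0\,\text{\rm Re}\,u$ (your $F$ plays the role of the paper's $f_0$).

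\textbf{Gap in Step 3.} You stop one line short of the proof and call that line the ``main obstacle''. You need no freedom in the choice of $I$ or $h$, and no annihilator or Koosis-type argument: the $y_c$ you already have is \emph{outer}. Indeed, $h^2$ is outer, and $c+\ov cI^2=c(1+\la I^2)$ with $\la=\ov c/c\in\T$. Since $I$ is nonconstant, $1+\la I^2$ has positive real part in $\D$, so it is outer. Now there are two cases.
\begin{itemize}
\item If $\th$ is nonconstant, no outer function lies in $\th H^1\supset E$.
\item If $\th$ is constant, the hypothesis $1\in\Phi$ gives $g(0)=\int_\T g\,dm=0$ for every $g\in E$, while an outer function has no zeros in $\D$.
\end{itemize}
Either way $y_c\notin E$. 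So $F\pm ty_c+E$ are two distinct cosets in the ball with midpoint $f+E$, contradicting (i.1). As written, your Step 3 does not establish that minimal elements are outer.

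\textbf{Gap in Step 4.} Uniqueness is also left open, and the Hahn--Banach route (common argument, positive ratio $F_1/F_2$) is unnecessary. Once Step 3 is complete, uniqueness is immediate. The set $\mathcal M(f;H^1,E)$ is convex, so if it contained $F_1\ne F_2$ it would contain $\f12(F_1+F_2)$. That midpoint is a unit-norm non-extreme point of $\text{\rm ball}\,(H^1)$, hence not outer by de Leeuw--Rudin, contradicting Step 3.

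\textbf{Justification in Step 1.} The conclusion is true but the reasoning is inaccurate. Membership $g\in\th H^1_\Phi$ means $\int_\T g\ov\th z^n\,dm=0$ for $n\ge1$ and $\int_\T g\ov\th\phi\,dm=0$ for $\phi\in\Phi$; it is not $\int_\T g\phi\,dm=0$. These are pairings against $\ov\th z^n$ and $\ov\th\phi$, which are not continuous for a general inner $\th$. The paper avoids this by writing $\|f+\th h\|_1=\|f\ov\th+h\|_1$ and approximating $f\ov\th$ from $H^1_\Phi$. That space \emph{is} the annihilator in $M(\T)$ of $\{z^n\}_{n\ge1}\cup\Phi\subset C(\T)$, hence weak-star closed and proximinal there. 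Alternatively, a Krein--\v{S}mulian argument using pointwise convergence in $\D$ shows that $E$ itself is weak-star closed in $H^1$.
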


\par Obviously, condition (ii.1) implies in particular that the set $\mathcal M\left(f;H^1,E\right)$ contains exactly one element, and the reader may wonder whether this is automatic. While all of our subspaces $E$ are proximinal in $H^1$ (see Lemma \ref{lem:lemone} below), for some of them we actually have 
$$\#\mathcal M\left(f;H^1,E\right)=1\quad\text{\rm whenever }f\in H^1,$$
meaning that every such $f$ has a unique best approximation in $E$. This uniqueness property holds, e.g., when $E=\th H^1$. We also have uniqueness when $E$ is $\th H^1(\La)$ or just $H^1(\La)$, provided that $\La$ is an infinite arithmetic progression in $\N:=\{1,2,\dots\}$ with an odd common difference, whereas no such thing is true for generic sets $\La\subset\N$; see \cite[Section 3]{Kah} for details. 

\par To see an example of an extreme coset, take a nonconstant inner function $\th$ and put $c_\th:=(1-|\th(0)|^2)^{-1}$. It is known (cf. \cite{New}) that the coset $c_\th+\th H^1$ has norm $1$ in $H^1/\th H^1$, and moreover, the unique function of minimum norm in that coset is 
$$F_\th:=c_\th\left(1-\ov{\th(0)}\th\right)^2.$$
Since $F_\th$ is outer, Theorem \ref{thm:hone} tells us that $c_\th+\th H^1$ is an extreme point of $\text{\rm ball}\,(H^1/\th H^1)$.

\par We now supplement Theorem \ref{thm:hone} with a related result, where the underlying space $H^1$ gets replaced by a certain subspace thereof, namely by the kernel of a Toeplitz operator. Precisely speaking, we fix a function $\ph\in L^\infty$ and define the {\it Toeplitz operator} $T_\ph$ by putting, for $f\in H^1$, 
$$(T_\ph f)(z):=\int_\T\f{\ph(\ze)f(\ze)}{1-z\ov\ze}\,dm(\ze),\qquad z\in\D.$$
(Note that the image $T_\ph(H^1)$ need not be contained in $H^1$, unless special assumptions are imposed on $\ph$.) We then write $K_1(\ph)$ for the kernel of $T_\ph$ in $H^1$; equivalently, 
$$K_1(\ph):=\{f\in H^1:\,\ov{z\ph f}\in H^1\}.$$
When $K_1(\ph)\ne\{0\}$, it makes sense to look at the extreme points of its unit ball (with respect to the natural norm $\|\cdot\|_1$), and these were determined by the author in \cite[Theorem 6]{DPAMS}. Namely, it was shown there that a function $f\in K_1(\ph)$ with $\|f\|_1=1$ is an extreme point of $\text{\rm ball}\,(K_1(\ph))$ if and only if 
\begin{equation}\label{eqn:innrelprime}
\text{\rm the inner factors of $f$ and $\ov{z\ph f}$ are relatively prime,}
\end{equation}
meaning that there is no nonconstant inner function $J$ such that $f/J\in H^1$ and $\ov{z\ph f}/J\in H^1$. When $\ph\equiv0$, we have $K_1(\ph)=H^1$ and this characterization reduces to de Leeuw and Rudin's theorem on the extreme points of $\text{\rm ball}\,(H^1)$. For further generalizations of their theorem to subspaces of $H^1$, see \cite{DAdv2021, DAdv2022}. 

\par One special case of interest arises when $\ph=\ov\th$, with $\th$ a nonconstant inner function. In this case, $K_1(\ph)$ becomes the so-called {\it model subspace} $H^1\cap\th\ov{H^1_0}$ (and is sure to contain non-null functions). Such subspaces, with $\th$ inner, are in fact precisely the (closed) invariant subspaces of the backward shift operator 
$$f\mapsto\f{f-f(0)}z,\qquad f\in H^1.$$
In the context of model subspaces, condition \eqref{eqn:innrelprime} appeared for the first time in \cite{DKal}.

\par Our next result deals with certain quotient spaces of $K_1(\ph)$. 

\begin{thm}\label{thm:kertoe} Let $\ph\in L^\infty$ be such that $K_1(\ph)\ne\{0\}$, and let $\psi$ be a nonconstant inner function for which the subspace $E_{\ph,\psi}:=K_1(\ph)\cap\psi H^1$ is proximinal in $K_1(\ph)$. Given a unit-norm coset $f+E_{\ph,\psi}$ in $K_1(\ph)/E_{\ph,\psi}$, the following are equivalent: 
\par{\rm(i.2)} $f+E_{\ph,\psi}$ is an extreme point of 
$\text{\rm ball}\,\left(K_1(\ph)/E_{\ph,\psi}\right)$.
\par{\rm(ii.2)} There is a function $h\in K_1(\ph)$ satisfying
$$\mathcal M\left(f;K_1(\ph),E_{\ph,\psi}\right)=\{h\}$$
and such that the inner factors of $h$ and $\ov{z\ph h}$ are relatively prime.
\end{thm}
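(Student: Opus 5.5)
The approach follows Koosis's idea. First we relate extreme cosets to extreme points of the set of minimal-norm representatives, and then we bring in the result from \cite{DPAMS} quoted above (Theorem 6 there).

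Write $K:=K_1(\ph)$ and $E:=E_{\ph,\psi}$. The first step is a general Banach-space observation. Let $Y\subset X$ be proximinal and $\|x+Y\|=1$. Then $x+Y$ is an extreme point of $\text{\rm ball}\,(X/Y)$ if and only if every point of $\mathcal M(x;X,Y)$ is an extreme point of $\text{\rm ball}\,(X)$, and $\mathcal M(x;X,Y)$ is a singleton.

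For ($\Rightarrow$), first suppose $\xi_1\ne\xi_2$ both lie in $\mathcal M$. The midpoint $\xi=\f12(\xi_1+\xi_2)$ also lies in $\mathcal M$ by convexity. Consider the coset map $\xi\pm\eta\mapsto x\pm\eta+Y$ with $\eta=\f12(\xi_1-\xi_2)\in Y$; it sends everything to the same coset, so this particular splitting gives nothing directly. Instead, note that $\xi$ is then not extreme in $\text{\rm ball}\,(X)$. So it suffices to prove that each $\xi\in\mathcal M$ is extreme and that distinct elements of $\mathcal M$ cannot coexist. Extremality goes as follows. If $\|\xi\pm y\|\le1$, then $\|x\pm y+Y\|\le1$, so extremality of the coset gives $y\in Y$. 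The uniqueness part needs the converse direction as well, so I will organize the argument differently.

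Claim: the coset is extreme if and only if $\mathcal M=\{\xi\}$ with $\xi$ satisfying the following property ($\ast$): whenever $y\in Y$ and $\|\xi\pm y\|\le 1$, we have $y=0$.

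($\Leftarrow$): Suppose $\|x\pm u+Y\|\le1$. By proximinality, pick $a,b\in Y$ with $\|x+u+a\|\le1$ and $\|x-u+b\|\le1$. Averaging gives $x+\f12(a+b)\in\mathcal M$, which must equal $\xi$. Put $w=u+\f12(a-b)$; then $\xi\pm w$ have norm at most $1$.

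Now, ($\ast$) alone only handles $w\in Y$, so the right condition is that $\xi$ be extreme in $\text{\rm ball}\,(X)$. Extremality forces $w=0$, hence $u\in Y$, and the coset is extreme.

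($\Rightarrow$): Proximinality lets us pick $\xi\in\mathcal M$. If $\|\xi\pm w\|\le1$, then $\|x\pm w+Y\|\le1$, so extremality of the coset gives $w\in Y$. Then $\xi\pm w\in\mathcal M$. Taking $w=\f12(\xi_1-\xi_2)$ for $\xi_1,\xi_2\in\mathcal M$ shows that if $\mathcal M$ is not a singleton, $\xi$ fails to be extreme even along directions in $Y$. Therefore we must show directly that an extreme coset forces $\mathcal M$ to be a singleton whose element is extreme in $\text{\rm ball}\,(X)$. So the real equivalence is: the coset is extreme if and only if $\mathcal M=\{\xi\}$ and every $w$ with $\|\xi\pm w\|\le1$ lies in $Y$.

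Applied to $X=K$ and $Y=E$, the second step is to show that, for the unique minimizer $h$, the condition ``$\|h\pm w\|_1\le1$, $w\in K$ $\Rightarrow$ $w\in E$'' is equivalent to $h$ being an extreme point of $\text{\rm ball}\,(K)$. By the result from \cite{DPAMS} quoted above, the latter means the inner factors of $h$ and $\ov{z\ph h}$ are relatively prime.

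One direction is trivial: if $h$ is extreme in $\text{\rm ball}\,(K)$, then $w=0\in E$. For the other direction, recall the structure from \cite{DPAMS}. The functions $w$ with $\|h\pm w\|_1\le1$ are exactly $w=g h$ with $g$ real-valued, $|g|\le1$ and $gh\in K$. If $I$ is a common inner divisor, one constructs such $w$ as $w=c\,(\alpha I+\ov\alpha\,\ov I\,)\,h$ with $h/I\in H^1$, where $\alpha$ is chosen to keep $\ov{z\ph w}\in H^1$. For such a $w$ to lie in $E$ one needs $\psi\mid w$, and this is where we must argue.

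This is the main obstacle. The plan is to show that if the inner factors of $h$ and $\ov{z\ph h}$ share a nonconstant divisor $I$, one can produce a nonzero $w\notin E$ with $\|h\pm w\|\le1$. We may do this by replacing $I$ with a suitable divisor; for example, take a Blaschke factor or singular piece, or use a family $w_t$ depending on a parameter. If every such $w$ were in $E$, then $h\pm w_t\in\mathcal M$, contradicting uniqueness. So in fact uniqueness of the minimizer does all the work: any nonzero $w$ with $\|h\pm w\|\le1$ that lands in $E$ gives a second minimizer. Hence, under uniqueness, the condition reduces exactly to extremality of $h$ in $\text{\rm ball}\,(K)$, which finishes the proof.
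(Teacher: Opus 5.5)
Your ($\Leftarrow$) direction is fine. It is exactly the paper's Lemma 2.2: average the two near-minimizers to land in $\mathcal M=\{h\}$, then use that $h$ is extreme in $\text{\rm ball}\,(K_1(\ph))$ by the relative-primeness characterization.

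The gap is in (i.2)$\Rightarrow$(ii.2): you never prove that $\mathcal M(f;K_1(\ph),E_{\ph,\psi})$ is a singleton. The abstract equivalence you start from is false in a general Banach space. Take $X=\C^2$ with the max norm, $Y=\C\times\{0\}$ and $x=(0,1)$. The coset is extreme in $X/Y\cong\C$, yet $\mathcal M=\{(t,1):|t|\le1\}$. The only general fact you actually derive is that, for $\xi\in\mathcal M$, every $w$ with $\|\xi\pm w\|\le1$ lies in $Y$. That fact cannot yield uniqueness, since differences of minimizers lie in $Y$ anyway. Your last paragraph then finishes by invoking uniqueness (``If every such $w$ were in $E$, then $h\pm w_t\in\mathcal M$, contradicting uniqueness''). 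This is circular, because uniqueness is precisely what has to be proved.

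The missing step is the one you call the main obstacle, and it needs no special choice of divisor (no Blaschke or singular pieces, no families $w_t$).

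\begin{itemize}
\item Let $f_0\in\mathcal M$ be any non-extreme point of $\text{\rm ball}\,(K_1(\ph))$, and let $u$ be a nonconstant common inner divisor of $f_0$ and $\ov{z\ph f_0}$.
\item Multiply $u$ by a unimodular constant so that $\int_\T|f_0|\,\mathrm{Re}\,u\,dm=0$. This is what your parameter $\alpha$ is really for; membership in $K_1(\ph)$ holds for every $\alpha$, and you also omitted this mean-zero condition in describing the admissible $w$.
\item Set $g=f_0\,\mathrm{Re}\,u=\tfrac12(f_0/u)(1+u^2)$. Then $g\in K_1(\ph)$ and $\|f_0\pm g\|_1=1$.
\item Since $1+u^2$ is outer, the inner factor of $g$ is that of $f_0/u$. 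If $\psi$ divided it, then $f_0\in\psi H^1\cap K_1(\ph)=E_{\ph,\psi}$, so the coset would be zero. This is impossible, since the coset has norm one.
\item Hence $g\notin E_{\ph,\psi}$, and the coset is not extreme.
\end{itemize}

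This shows that, under (i.2), every element of $\mathcal M$ is extreme in $\text{\rm ball}\,(K_1(\ph))$. Uniqueness then follows from convexity of $\mathcal M$: the midpoint of two distinct minimizers would be a non-extreme element of $\mathcal M$. So the correct order is the reverse of yours: first show all minimizers are extreme, then deduce uniqueness. Trying to get uniqueness first is what forces your argument into circularity.
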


\par With regard to the hypothesis that $E_{\ph,\psi}$ is proximinal in $K_1(\ph)$, one might argue that this assumption is not explicit enough to be useful (or usable). However, we can easily come up with examples where $\dim E_{\ph,\psi}<\infty$, and since finite-dimensional subspaces are proximinal, Theorem \ref{thm:kertoe} is certainly applicable in these cases. To produce such an example, let $\psi$ be a nonconstant inner function and $B$ a finite Blaschke product; then put $\ph=\ov\psi\ov B$. Given a function $f\in\psi H^1$, we write it as $f=F\psi$ with $F\in H^1$ and note that $f\in K_1(\ph)$ if and only if $\ov z\ov FB\in H^1$. This last condition means that $F\in K_1(\ov B)$, and so 
$$E_{\ph,\psi}=\psi K_1(\ov B).$$
Because the model subspace $K_1(\ov B)=H^1\cap B\ov{H^1_0}$ is finite-dimensional (indeed, its elements are rational functions whose poles are contained among those of $B$), so is $E_{\ph,\psi}$.

\par We finally turn to quotients of $H^\infty$.

\begin{thm}\label{thm:hinfty} Let $E$ be a proper weak-star closed subspace of $H^\infty$. Given a unit-norm coset $f+E$ in $H^\infty/E$, the following are equivalent: 
\par{\rm(i.3)} $f+E$ is an extreme point of $\text{\rm ball}\,(H^\infty/E)$.
\par{\rm(ii.3)} There is a unit-norm function $h\in H^\infty$ with $\log(1-|h|)\notin L^1$ such that 
$$\mathcal M\left(f;H^\infty,E\right)=\{h\}.$$
\end{thm}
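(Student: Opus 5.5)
The plan is to reduce everything to facts about $\text{\rm ball}\,(H^\infty)$ itself, using the face structure of $\mathcal M(f;H^\infty,E)$ and the de Leeuw--Rudin criterion recalled in the introduction. First I record that $E$ is proximinal in $H^\infty$. Indeed, $H^\infty$ is a dual space, namely of $L^1/\ov{H^1_0}$, and $E$ is weak-star closed, so the Banach--Alaoglu fact quoted in the introduction applies. Hence $\mathcal M:=\mathcal M(f;H^\infty,E)$ is nonempty, convex, and every element has norm $1$.

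For (ii.3)$\Rightarrow$(i.3), the argument is abstract. Suppose $f+E=\f12\big((a+E)+(b+E)\big)$ with both cosets in the unit ball. By proximinality, pick $\alpha\in\mathcal M(a;H^\infty,E)$ and $\beta\in\mathcal M(b;H^\infty,E)$, so that $\|\alpha\|_\infty,\|\beta\|_\infty\le1$. Then $\f12(\alpha+\beta)$ lies in $f+E$ and has norm $\le1$, so it belongs to $\mathcal M=\{h\}$. By the classical theorem, the condition $\log(1-|h|)\notin L^1$ makes $h$ an extreme point of $\text{\rm ball}\,(H^\infty)$. Therefore $\alpha=\beta=h$, and so $a+E=b+E$.

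For (i.3)$\Rightarrow$(ii.3), the key step is the following claim: no $\xi\in\mathcal M$ can satisfy $\log(1-|\xi|)\in L^1$. Suppose, to the contrary, that some $\xi\in\mathcal M$ does. Let $G$ be the outer function with $|G|=1-|\xi|$ a.e. on $\T$. For every $u\in\text{\rm ball}\,(H^\infty)$ we have $|\xi\pm Gu|\le|\xi|+|G|\le1$. Hence
$$f+E=\tfrac12\big((\xi+Gu+E)+(\xi-Gu+E)\big)$$
expresses $f+E$ as the midpoint of two cosets in the unit ball. Extremality then forces $Gu\in E$ for all such $u$, that is, $GH^\infty\subset E$. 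Since $G$ is outer, $GH^\infty$ is weak-star dense in $H^\infty$; this is Beurling's theorem in its weak-star form, which I would justify via the duality with $H^1$ (equivalently, $G$ is a weak-star cyclic vector). Because $E$ is weak-star closed, this gives $E=H^\infty$, contradicting the assumption that $E$ is proper. This density step is where the hypotheses on $E$ really enter, and it is the point I expect to need the most care.

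It remains to show that $\mathcal M$ is a singleton; once that is done, (ii.3) follows from the claim. Suppose $\xi_1\ne\xi_2$ both lie in $\mathcal M$. Set $\xi=\f12(\xi_1+\xi_2)$, which lies in $\mathcal M$ by convexity, and $g=\f12(\xi_1-\xi_2)\ne0$. Then $\|\xi\pm g\|_\infty\le1$, and the parallelogram identity gives
$$|\xi|^2+|g|^2\le1 \quad\text{a.e.},$$
so $|g|^2\le1-|\xi|^2\le2(1-|\xi|)$. Since $g$ is a nonzero $H^\infty$ function, $\log|g|\in L^1$, and hence $\log(1-|\xi|)$ is bounded below by an integrable function. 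Moreover $\log(1-|\xi|)\le0$ because $\|\xi\|_\infty\le1$, so $\log(1-|\xi|)\in L^1$. This contradicts the claim, so $\mathcal M=\{h\}$ with $\|h\|_\infty=1$ and $\log(1-|h|)\notin L^1$.
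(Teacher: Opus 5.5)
Your proposal is correct and follows the paper's proof essentially step by step: proximinality of $E$ from weak-star closedness, an inlined version of Lemma \ref{lem:lemtwo} for (ii.3)$\Rightarrow$(i.3), the outer function with modulus $1-|\xi|$ together with weak-star density of $GH^\infty$ and properness of $E$ to rule out nonextreme minimizers, and the midpoint argument for uniqueness. The only differences are cosmetic: you phrase the density step contrapositively, and in the uniqueness step you check directly via the parallelogram identity that the midpoint satisfies $\log(1-|\xi|)\in L^1$, whereas the paper simply invokes the classical characterization of the extreme points of $\text{\rm ball}\,(H^\infty)$.
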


\par To keep on the safe side, we recall that $H^\infty$ is the dual of $L^1/H^1_0$ (see \cite[Chapter IV]{G}), so the term \lq\lq weak-star closed" in the above theorem should be understood accordingly. As examples of proper weak-star closed subspaces in $H^\infty$, we mention the shift-invariant ideals $\th H^\infty$ and the model subspaces $H^\infty\cap\th\ov{H^\infty_0}$, each of these associated with a nonconstant inner function $\th$. Further examples include subspaces of the form 
$$H^\infty(\La):=\{f\in H^\infty:\,\text{\rm spec}\,f\subset\La\},$$
with $\La$ a proper subset of $\Z_+$, and more generally, annihilators of nontrivial sets in $L^1/H^1_0$. 

\par We go on to remark that Theorem \ref{thm:hinfty} can be applied in certain situations that are of interest in the context of Nevanlinna--Pick interpolation problems. Let $I\subset\T$ be an open arc with $m(I)<1$ that contains the point $1$, and let $B$ be a Blaschke product whose zeros $\{z_j\}$ satisfy $z_j\to1$. Suppose also that $f$ is an invertible (hence outer) function in $H^\infty$ such that $\|f\|_\infty=1$ and $|f|=1$ on $I$, but $|f|\not\equiv1$ on $\T$. Under these assumptions, $f$ is known to be the only unit-norm $H^\infty$ function that interpolates the values $f(z_j)$ at $z_j$. (See \cite[p.\,145]{G} for a detailed discussion of this example.) Thus, the coset $f+BH^\infty$ has norm $1$ in $H^\infty/BH^\infty$ and 
$$\mathcal M\left(f;H^\infty,BH^\infty\right)=\{f\}.$$
Since $\log(1-|f|)\notin L^1$, Theorem \ref{thm:hinfty} tells us that $f+BH^\infty$ is an extreme point of $\text{\rm ball}\,(H^\infty/BH^\infty)$. At the same time, by \cite[Theorem 4.1]{KDuke}, the coset $f\ov B+H^\infty$ is nonextreme in $\text{\rm ball}\,(L^\infty/H^\infty)$; indeed, the unique unit-norm function in that coset is $f\ov B$ and its modulus is nonconstant on $\T$.

\par Finally, we mention an open question that comes to mind in light of Theorem \ref{thm:hinfty}. Namely, we wonder whether that theorem could be adjusted to yield a description of the {\it strongly extreme} points of $\text{\rm ball}\,(H^\infty/E)$, with $E$ as above. (A unit-norm vector $x$ in a Banach space $X$ is said to be a strongly extreme point of $\text{\rm ball}\,(X)$ if for every $\eps>0$ there is a $\de>0$ such that the inequalities $\|x\pm y\|<1+\de$ imply, for $y\in X$, that $\|y\|<\eps$.) In $H^\infty$, the strongly extreme points of the unit ball are precisely the inner functions; see \cite{CT} for a proof and \cite{DAFM} for a further generalization. Now, for a quotient space $H^\infty/E$, one is tempted to conjecture that a coset $f+E$ of norm $1$ will be a strongly extreme point of the unit ball if and only if the set $\mathcal M\left(f;H^\infty,E\right)$ contains exactly one function, which is inner.

\par We now turn to the proofs of our current results. In the next section, we establish two auxiliary facts to lean upon. In the remaining sections, we use them to prove Theorems \ref{thm:hone}, \ref{thm:kertoe}, and \ref{thm:hinfty}.

\section{Preliminaries}

Two lemmas will be proved. The first of these is needed to make sure that the subspaces $E$ appearing in Theorem \ref{thm:hone} are proximinal. The other one is based on (and provides an abstract version of) an observation due to Koosis, which was key to his proof of \cite[Theorem 4.1]{KDuke}.

\begin{lem}\label{lem:lemone} Let $E=\th H^1_\Phi$, where $\th$ and $\Phi$ satisfy the hypotheses of Theorem \ref{thm:hone}. Then $E$ is proximinal in $H^1$.
\end{lem}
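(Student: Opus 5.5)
The plan is to realize $E$ as a weak-star closed subspace of a dual space and then invoke the Banach--Alaoglu argument recalled in the introduction. The natural candidate is $H^1_0$ viewed as the dual of $C(\T)/A$, where $A$ is the disk algebra; equivalently, $H^1$ itself can be viewed through the identification of $H^1_0$ with $(C/A)^*$ via the F.\ and M.\ Riesz theorem. A cleaner variant is to note that $\ov z H^1_0$-type issues only concern normalization. So the first step is to find a subspace $E_0$ of $H^1_0$ with $E=zE_0$ or similar, and to check that multiplication by $z$ is an isometry, so proximinality transfers.

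I will split into cases. If $\th$ is nonconstant, then after multiplying by a unimodular constant and possibly factoring out a Möbius automorphism, we can try to arrange $\th(0)=0$. The simplest route, however, is the following. Consider $H^1$ as a closed subspace of $M(\T)$, the dual of $C(\T)$, via $f\mapsto f\,dm$. By the F.\ and M.\ Riesz theorem, $H^1_0$ is exactly the set of measures annihilating $\{\ov\ze^{k}\}_{k\ge0}$, i.e.\ annihilating $\ov A$, so it is weak-star closed in $M(\T)$. Now $E\subset H^1_0$ in both cases: if $\th$ is constant, then $1\in\Phi$ forces $\widehat f(0)=0$ for $f\in H^1_\Phi$. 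If $\th$ is nonconstant, $E$ need not lie in $H^1_0$, so I will instead use the measures $\mu$ with $\ov\th\,\mu$ analytic. Concretely, I will show that $E$ equals the set of $\mu\in M(\T)$ annihilating
$$\{\ov z\ov{g}\ov\th\cdot\text{(trig. polys)}\}\ \cup\ \{\ov\th\phi\}\ \cup\cdots$$
Because $\ov\th$ is not continuous, this characterization fails, so weak-star closedness in $M(\T)$ is not immediate.

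The better approach is therefore directly via compactness. Given $f\in H^1$, take $g_n\in H^1_\Phi$ with $\|f-\th g_n\|_1\to d:=\operatorname{dist}(f,E)$. Then $\|g_n\|_1=\|\th g_n\|_1$ is bounded. Viewing $g_n\,dm$ as bounded measures, pass to a weak-star convergent subsequence $g_n\,dm\to\mu$. The key step is to show $\mu=g\,dm$ with $g\in H^1$. Here the extra hypothesis is used. When $\th$ is constant, $1\in\Phi$ gives $\int g_n\,dm=0$, so $\mu$ annihilates $\ov\ze^k$ for $k\ge0$, and F.\ and M.\ Riesz gives $\mu=g\,dm$ with $g\in H^1_0$. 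When $\th$ is nonconstant, $g_n$ is a priori only in $H^1$, so $\mu$ is analytic up to its zeroth coefficient, i.e.\ $\mu-\widehat\mu(0)m$ is absolutely continuous, which still yields $g\in H^1$. In fact the Riesz theorem applies to annihilators of $\ze^{-k}$, $k\ge1$, giving $\mu=g\,dm$ with $g\in H^1$ in all cases. So the constancy hypothesis is really needed elsewhere, namely for $E$ to be proper. Next, $g\in H^1_\Phi$ because $\Phi\subset C(\T)$ and the defining functionals are weak-star continuous.

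The main obstacle is passing the norm bound to the limit for $\th g$ rather than $g$, since $\th$ is only $L^\infty$ and multiplication by $\th$ is not weak-star continuous on measures. To handle this, I will use Cesàro means or dilations. Since $g_n\to g$ weak-star and all are in $H^1$, we have $g_n(z)\to g(z)$ pointwise in $\D$. Hence $f-\th g_n\to f-\th g$ locally uniformly in $\D$. Lower semicontinuity of the $H^1$ norm under locally uniform convergence follows from $\int|F(r\ze)|\,dm\le\liminf_n\int|F_n(r\ze)|\,dm\le\liminf_n\|F_n\|_1$, followed by letting $r\to1$. This gives $\|f-\th g\|_1\le d$, so $\th g$ is a best approximation, which proves proximinality. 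On reflection, then, the hypothesis on constant $\th$ is not needed for this lemma.
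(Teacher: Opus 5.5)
Your final argument is correct: a minimizing sequence, a weak-star limit of the $g_n\,dm$, F.~and M.~Riesz to get $g\in H^1_\Phi$, then lower semicontinuity. You are also right that the extra hypothesis on constant $\th$ plays no role in this lemma; the paper's proof does not use it either. Your route differs from the paper's at the one step that matters, namely how $\th$ is handled.

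The paper never lets $\th$ enter a limiting process. Since $|\th|=1$ a.e., $\|f-\th h\|_1=\|f\ov\th-h\|_1$. So a best approximation of $f$ from $E$ is the same thing as a best approximation of the fixed measure $f\ov\th\,dm$ from $H^1_\Phi$. The subspace $H^1_\Phi$ is weak-star closed in $M(\T)=C(\T)^*$, being the annihilator of $\{z^n\}_{n\ge1}\cup\Phi$, and the general Banach--Alaoglu principle finishes the proof.

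In your setting, this shows your ``main obstacle'' is self-imposed. Once $g_n\,dm\to g\,dm$ weak-star, you also have $(f\ov\th-g_n)\,dm\to(f\ov\th-g)\,dm$ weak-star. Weak-star lower semicontinuity of the total variation norm then gives $\|f-\th g\|_1=\|f\ov\th-g\|_1\le\liminf_n\|f-\th g_n\|_1=d$ in one line.

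Your detour through $\D$ is still valid, and it buys something. Pairing with the Poisson kernel gives $g_n(z)\to g(z)$. Fatou's lemma on each circle $|z|=r$ then yields $\int|F(r\ze)|\,dm\le\liminf_n\int|F_n(r\ze)|\,dm\le\liminf_n\|F_n\|_1$. So you do not need the upgrade to locally uniform convergence, which as written is asserted without the Vitali/local-boundedness justification. As a by-product, your argument shows that $\th H^1_\Phi$ is itself weak-star sequentially closed in $M(\T)$, which is more than the paper needs. The cost is an extra analytic step that the paper avoids.

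Two clean-ups:
\begin{itemize}
\item Delete the abandoned opening. The claim that $E\subset H^1_0$ ``in both cases'' is false in general for nonconstant $\th$, for example with $\Phi=\emptyset$ and $\th(0)\ne0$.
\item With the bilinear pairing $\int\phi\,d\mu$ used in the paper, $H^1_0$ annihilates $\{\ze^k\}_{k\ge0}$, i.e.\ the disk algebra, not $\{\ov\ze^k\}_{k\ge0}$. Your statements are correct only for the conjugate-linear pairing, and you should fix one convention throughout.
\end{itemize}
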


\begin{proof} Given $f\in H^1$, we want to find a function $h_0\in H^1_\Phi$ such that
\begin{equation}\label{eqn:distfthetah}
\inf\left\{\|f+\th h\|_1:\,h\in H^1_\Phi\right\}=\|f+\th h_0\|_1.
\end{equation}
Identifying each function $w\in L^1$ with the absolutely continuous measure $w\,dm$, we may embed $L^1$ (and, in particular, $H^1_\Phi$) isometrically in $M(\T)$, the space of complex Borel measures on $\T$ with the usual total variation norm. Viewing the latter space as the dual of $C(\T)$, under the natural pairing, we note that $H^1_\Phi$ is the annihilator (in $M(\T)$) of the family 
$$\left\{z^n:\,n=1,2,\dots\right\}\cup\Phi$$
contained in $C(\T)$. Therefore, $H^1_\Phi$ is a weak-star closed subspace of $M(\T)$, which in turn implies that $H^1_\Phi$ is proximinal therein (we refer again to \cite[p.\,239]{Phe}).

\par Thus, whenever $f\in H^1$, the integrable function $f\ov\th$ (or, equivalently, the measure $f\ov\th\,dm$) has a best approximation in $H^1_\Phi$. In other words, there exists a function $h_0\in H^1_\Phi$ such that 
$$\inf\left\{\|f\ov\th+h\|_1:\,h\in H^1_\Phi\right\}=\|f\ov\th+h_0\|_1.$$
This last identity obviously coincides with \eqref{eqn:distfthetah}, and we are done.
\end{proof}

\begin{lem}\label{lem:lemtwo} Let $X$ be a Banach space and $Y$ a proximinal subspace of $X$. Suppose further that $x\in X$ and
\begin{equation}\label{eqn:meqextr}
\mathcal M(x;X,Y)=\{x_0\},
\end{equation}
where $x_0$ is an extreme point of $\text{\rm ball}\,(X)$. Then $x+Y$ is an extreme point of $\text{\rm ball}\,(X/Y)$.
\end{lem}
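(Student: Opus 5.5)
We argue directly from the definition of an extreme point, via the equivalent formulation recalled in the introduction. First we record that $\|x+Y\|_{X/Y}=\|x_0\|_X=1$. Indeed, $x_0$ is an extreme point of $\text{\rm ball}\,(X)$, so $\|x_0\|_X=1$: a point of norm $<1$ (including $0$) is never extreme, since it is the midpoint of two distinct points of the ball. Since $x_0\in\mathcal M(x;X,Y)$, its norm equals the quotient norm of $x+Y$, so $x+Y$ lies in $\text{\rm ball}\,(X/Y)$ with norm $1$.

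Now suppose $x+Y=\f12\bigl((u+Y)+(v+Y)\bigr)$ with $u+Y,v+Y\in\text{\rm ball}\,(X/Y)$. We must show $u+Y=v+Y$. By proximinality of $Y$, replace $u$ and $v$ by minimal-norm representatives $u_0\in\mathcal M(u;X,Y)$ and $v_0\in\mathcal M(v;X,Y)$. Then $\|u_0\|_X\le1$ and $\|v_0\|_X\le1$, and $\f12(u_0+v_0)\in x+Y$. Moreover
$$1=\|x+Y\|_{X/Y}\le\left\|\tfrac12(u_0+v_0)\right\|_X\le\tfrac12\bigl(\|u_0\|_X+\|v_0\|_X\bigr)\le1,$$
so $\f12(u_0+v_0)$ is a point of minimum norm in $x+Y$. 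By \eqref{eqn:meqextr} this forces $\f12(u_0+v_0)=x_0$.

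Since $x_0$ is extreme in $\text{\rm ball}\,(X)$ and $u_0,v_0$ lie in that ball, we get $u_0=v_0=x_0$. Therefore $u+Y=u_0+Y=v_0+Y=v+Y$, which proves that $x+Y$ is extreme.

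The only delicate point is the choice of representatives. One needs representatives of norm at most $1$ so that the extremality of $x_0$ in $\text{\rm ball}\,(X)$ can be applied. This is exactly where proximinality is used. Without it we could only obtain representatives of norm below $1+\eps$, and the argument would break down. Uniqueness in \eqref{eqn:meqextr} is then what identifies the midpoint $\f12(u_0+v_0)$ with $x_0$ itself.
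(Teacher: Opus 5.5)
Your proof is correct and is essentially the paper's argument. Proximinality gives representatives of norm at most $1$ in the two cosets, and their average lies in $x+Y$ with norm at most $1$. Uniqueness in $\mathcal M(x;X,Y)$ then identifies that average with $x_0$, and extremality of $x_0$ in $\text{\rm ball}\,(X)$ forces the two cosets to coincide. The difference is only cosmetic: the paper uses the symmetric formulation $\|x_0\pm w+Y\|_{X/Y}\le1$, deducing $y_1+y_2=0$ from uniqueness and then $w+y_1=0$ from extremality, whereas you work with the midpoint definition directly.
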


\begin{proof} First of all, since $\|x_0\|=1$, it follows from \eqref{eqn:meqextr} that the coset $x+Y$ has norm $1$ in $X/Y$. We also see from \eqref{eqn:meqextr} that $x_0$ is the unique element of minimum norm in $x+Y$, and so 
\begin{equation}\label{eqn:grone}
\|x_0+y\|>1\quad\text{\rm for all }\,y\in Y\setminus\{0\}.
\end{equation}
Now suppose that, for some $w\in X$, 
\begin{equation}\label{eqn:twonormslone}
\|x_0+w+Y\|_{X/Y}\le1\quad\text{\rm and}\quad\|x_0-w+Y\|_{X/Y}\le1.
\end{equation}
To prove that $x+Y(=x_0+Y)$ is an extreme point of $\text{\rm ball}\,(X/Y)$, we must show that $w\in Y$. 
\par Since $Y$ is proximinal, each of the two cosets in \eqref{eqn:twonormslone} contains an element of minimum norm, and we may rephrase \eqref{eqn:twonormslone} by saying that there exist vectors $y_1,y_2\in Y$ such that
\begin{equation}\label{eqn:twonormslonebis}
\|x_0+w+y_1\|\le1\quad\text{\rm and}\quad\|x_0-w+y_2\|\le1.
\end{equation}
In view of the identity 
$$x_0+\f12\left(y_1+y_2\right)=\f12\left(x_0+w+y_1\right)+\f12\left(x_0-w+y_2\right),$$
inequalities \eqref{eqn:twonormslonebis} imply that 
$$\left\|x_0+\f12\left(y_1+y_2\right)\right\|\le1.$$
When coupled with \eqref{eqn:grone}, this tells us that $y_1+y_2=0$. We may therefore rewrite \eqref{eqn:twonormslonebis} in the form 
$$\|x_0\pm(w+y_1)\|\le1,$$
and since $x_0$ is an extreme point of $\text{\rm ball}\,(X)$, we finally conclude that $w+y_1=0$. Thus $w\in Y$, as required.
\end{proof}

\section{Proof of Theorem \ref{thm:hone}} 

That (ii.1) implies (i.1) is a straightforward consequence of Lemma \ref{lem:lemtwo}, applied with $X=H^1$ and $Y=E$. The hypotheses of that lemma are, in fact, fulfilled by virtue of Lemma \ref{lem:lemone}. One should also bear in mind that, by the de Leeuw--Rudin theorem, the outer functions of norm $1$ are precisely the extreme points of $\text{\rm ball}\,(H^1)$.

\par Now let us prove that (i.1) implies (ii.1). Since $E$ is proximinal in $H^1$, as Lemma \ref{lem:lemone} tells us, we know that the set $\mathcal M\left(f;H^1,E\right)$ is nonempty. Consequently, for (ii.1) to fail, this set must contain either a non-outer function or two distinct functions (always of norm $1$). Our plan is therefore to show that each of these situations is incompatible with (i.1).

\par Suppose that $f_0$ is a function from $f+E$ satisfying $\|f_0\|_1=1$ and $f_0=Fu$, where $F$ is outer and $u$ is inner; assume in addition that $u$ is nonconstant. Replacing $u$ by $\la u$ if necessary, where $\la$ is a suitable complex number of modulus $1$, we may ensure that 
\begin{equation}\label{eqn:inteqzero}
\int_\T|f_0|\cdot\text{\rm Re}\,u\,dm=0.
\end{equation}
To see why, just note that the real-valued function 
$$\varphi(\la):=\int_\T|f_0|\cdot\text{\rm Re}(\la u)\,dm,\qquad\la\in\T,$$
is continuous on the circle and satisfies $\varphi(-1)=-\varphi(1)$. 
\par Now that \eqref{eqn:inteqzero} is achieved, we put $g:=f_0\cdot\text{\rm Re}\,u$. Since $u$ is nonconstant, $g$ is non-null. Moreover, since 
$$g=\f12Fu(\ov u+u)=\f12F\left(1+u^2\right),$$
it follows that $g$ is an outer function in $H^1$. Recalling \eqref{eqn:inteqzero}, we also see that
\begin{equation}\label{eqn:fnoughtpmg}
\|f_0\pm g\|_1=\int_\T|f_0|\,(1\pm\text{\rm Re}\,u)\,dm=\int_\T|f_0|\,dm=1.
\end{equation}
Hence 
\begin{equation}\label{eqn:twonormshonee}
\|f_0+g+E\|_{H^1/E}\le1\quad\text{\rm and}\quad\|f_0-g+E\|_{H^1/E}\le1.
\end{equation}
We also note that the two cosets involved, namely $f_0+g+E$ and $f_0-g+E$, are different because $g\notin E$. Indeed, while $g$ is outer, the functions in $E$ are all divisible by the inner factor $\th$; and if the latter happens to be constant, then every function from $E(=H^1_\Phi)$ vanishes at the origin, since $\Phi$ contains the constant function $1$. Consequently, \eqref{eqn:twonormshonee} implies that the coset $f_0+E(=f+E)$ is a nonextreme point of $\text{\rm ball}\,(H^1/E)$, contradicting (i.1).

\par We have just seen that $\mathcal M\left(f;H^1,E\right)$ cannot contain a non-outer function, as long as (i.1) is fulfilled. Now assume that
\begin{equation}\label{eqn:cardmgetwo}
\#\mathcal M\left(f;H^1,E\right)\ge2,
\end{equation}
so that $\mathcal M\left(f;H^1,E\right)$ contains two distinct functions, say $f_1$ and $f_2$. Their midpoint $\f12(f_1+f_2)$ will then also belong to $\mathcal M\left(f;H^1,E\right)$. Moreover, since this midpoint is obviously a (unit-norm) nonextreme point of $\text{\rm ball}\,(H^1)$, it will be a non-outer function in $\mathcal M\left(f;H^1,E\right)$. Consequently, the assumption \eqref{eqn:cardmgetwo} contradicts (i.1) as well. The proof is complete. 

\section{Proof of Theorem \ref{thm:kertoe}}

The implication (ii.2)$\implies$(i.2) follows immediately upon applying Lemma \ref{lem:lemtwo}, coupled with the fact that the extreme points of $\text{\rm ball}\,(K_1(\ph))$ are characterized (among the unit-norm functions) by condition \eqref{eqn:innrelprime}. 

\par To prove the converse, we begin by showing that if (i.2) holds, then the (nonempty) set $\mathcal M\left(f;K_1(\ph),E_{\ph,\psi}\right)$ cannot contain any nonextreme point of $\text{\rm ball}\,(K_1(\ph))$. Assume to the contrary that $\mathcal M\left(f;K_1(\ph),E_{\ph,\psi}\right)$ does contain such a function, say $f_0$. This means that $f_0$ is a unit-norm element of $f+E_{\ph,\psi}$ and that the inner factors of $f_0$ and $\ov{z\ph f_0}$ are both divisible by a certain nonconstant inner function, say $u$. In particular, $f_0$ is writable in the form 
\begin{equation}\label{eqn:bulba}
f_0=FI_0u, 
\end{equation}
where $F$ is outer and $I_0$ is inner. Arguing as in the proof of Theorem \ref{thm:hone}, we may further assume that our current $f_0$ and $u$ satisfy \eqref{eqn:inteqzero}. 

\par We now put $g:=f_0\cdot\text{\rm Re}\,u$, and we claim that
\begin{equation}\label{eqn:ginknotine}
g\in K_1(\ph)\setminus E_{\ph,\psi}.
\end{equation}
To verify that $g\in K_1(\ph)$, we need to check that $\ov{z\ph g}\in H^1$. This follows from the identity
$$\ov{z\ph g}=\ov{z\ph f_0}\cdot\text{\rm Re}\,u=\f12\ov{z\ph f_0}\,(u+\ov u)
=\f12\ov{z\ph f_0}\,(u+1/u),$$
since $\ov{z\ph f_0}$ is an $H^1$ function whose inner factor is divisible by $u$. Furthermore, recalling \eqref{eqn:bulba}, we get 
$$g=\f12FI_0u\,(\ov u+u)=\f12F(1+u^2)I_0,$$
which shows that the inner factor of $g$ is $I_0$. Finally, we observe that $I_0$ is not divisible by $\psi$; otherwise, \eqref{eqn:bulba} would imply that $f_0\in\psi H^1$ (and hence $f_0\in E_{\ph,\psi}$), so the coset $f_0+E_{\ph,\psi}\left(=f+E_{\ph,\psi}\right)$ would be null, which is not the case. We readily conclude that $g\notin\psi H^1$, and \eqref{eqn:ginknotine} is thereby established.

\par From \eqref{eqn:ginknotine} we infer that 
$$f_0+g+E_{\ph,\psi}\ne f_0-g+E_{\ph,\psi}.$$
Also, since $\|f_0\pm g\|_1=1$ by virtue of \eqref{eqn:fnoughtpmg}, we have 
$$\|f_0+g+E_{\ph,\psi}\|\le1\quad\text{\rm and}
\quad\|f_0-g+E_{\ph,\psi}\|\le1,$$
the norm $\|\cdot\|$ being that of the quotient space $K_1(\ph)/E_{\ph,\psi}$. This, however, tells us that the coset $f_0+E_{\ph,\psi}\left(=f+E_{\ph,\psi}\right)$ is a non-extreme point of $\text{\rm ball}\,\left(K_1(\ph)/E_{\ph,\psi}\right)$, contradicting (i.2). We have thus proved that the set $\mathcal M\left(f;K_1(\ph),E_{\ph,\psi}\right)$ does not contain any non-extreme point of $\text{\rm ball}\,(K_1(\ph))$, provided that (i.2) is fulfilled. 

\par To discard the possibility that 
\begin{equation}\label{eqn:cardgetwotoep}
\#\mathcal M\left(f;K_1(\ph),E_{\ph,\psi}\right)\ge2,
\end{equation}
we argue---just as we did in the preceding proof---that if $f_1$ and $f_2$ were two distinct functions in $\mathcal M\left(f;K_1(\ph),E_{\ph,\psi}\right)$, then their midpoint $\f12(f_1+f_2)$ would also belong to $\mathcal M\left(f;K_1(\ph),E_{\ph,\psi}\right)$. At the same time, this midpoint would obviously be a nonextreme point of $\text{\rm ball}\,(K_1(\ph))$, so we infer that \eqref{eqn:cardgetwotoep} would indeed contradict (i.2). 

\par To summarize, (i.2) implies that $\mathcal M\left(f;K_1(\ph),E_{\ph,\psi}\right)$ contains exactly one function, say $h$, which is an extreme point of $\text{\rm ball}\,(K_1(\ph))$. This last condition can be rephrased by saying that the inner factors of $h$ and $\ov{z\ph h}$ are relatively prime, and we arrive at (ii.2). The proof is complete.

\section{Proof of Theorem \ref{thm:hinfty}}

The strategy to be employed is similar to that of the preceding proofs. One of the two implications, namely (ii.3)$\implies$(i.3), is again verified immediately by applying Lemma \ref{lem:lemtwo}, this time with $X=H^\infty$ and $Y=E$. The lemma is indeed applicable, since any weak-star closed subspace of a dual space is proximinal. 

\par As a first step in proving that (i.3) implies (ii.3), we shall deduce from the former condition that the set $\mathcal M\left(f;H^\infty,E\right)$ does not contain any nonextreme point of $\text{\rm ball}\,(H^\infty)$. Assuming the contrary, we could find a unit-norm function $f_0\in H^\infty$ satisfying $f_0-f\in E$ and $\log(1-|f_0|)\in L^1$. Let $F$ be the outer function with modulus $1-|f_0|$, so that 
$$F(z):=\exp\left\{\int_\T\f{\ze+z}{\ze-z}\log(1-|f_0(\ze)|)\,dm(\ze)\right\},\qquad z\in\D.$$
We further claim that $FH^\infty\not\subset E$, or in other words, that there exists a function $G\in H^\infty$ with
\begin{equation}\label{eqn:fgnotine}
FG\notin E.
\end{equation}
Indeed, if our (weak-star closed) subspace $E$ contained the set $FH^\infty$, it would also contain the weak-star closure of $FH^\infty$, which actually coincides with $H^\infty$. (Since $F$ is outer, it follows that $FH^\infty$ is weak-star dense in $H^\infty$; see \cite[pp.\,81--82]{G}.) We would then have $E=H^\infty$, which is not the case by hypothesis.

\par The existence of a function $G\in H^\infty$ satisfying \eqref{eqn:fgnotine} is thus established. Such a $G$ is obviously non-null, and we may normalize it to get $\|G\|_\infty=1$. This done, we put $g:=FG$. Since
$$|f_0\pm g|\le|f_0|+|g|\le|f_0|+|F|=1$$
almost everywhere on $\T$, we see that $\|f\pm g\|_\infty\le1$. This in turn implies that 
\begin{equation}\label{eqn:twonormshinftye}
\|f_0+g+E\|_{H^\infty/E}\le1\quad\text{\rm and}\quad\|f_0-g+E\|_{H^\infty/E}\le1.
\end{equation}
Moreover, since $g(=FG)$ is not in $E$, we have $f_0+g+E\ne f_0-g+E$. Therefore, inequalities \eqref{eqn:twonormshinftye} tell us that the coset $f_0+E(=f+E)$ is a nonextreme point of $\text{\rm ball}\,(H^\infty/E)$, contradicting (i.3).

\par Thus, (i.3) implies that no nonextreme point of $\text{\rm ball}\,(H^\infty)$ can be in $\mathcal M\left(f;H^\infty,E\right)$. To arrive at (ii.3), it now suffices to show that the (nonempty) set $\mathcal M\left(f;H^\infty,E\right)$ contains exactly one element. In fact, if $f_1$ and $f_2$ were two distinct functions lying in that set, then their midpoint $\f12(f_1+f_2)=:f_*$ would also belong to it; besides, $f_*$ would then be a nonextreme point of $\text{\rm ball}\,(H^\infty)$. Such a situation is incompatible with (i.3), as we know, so we eventually conclude that 
$$\#\mathcal M\left(f;H^\infty,E\right)=1,$$
and we are done.

\end{document}